\documentclass[11pt]{amsart}
\usepackage{amsfonts,amsthm,amsmath}
\usepackage{natbib}
\usepackage{hyperref}

\newtheorem{theorem}{Theorem}[section]
\newtheorem{lemma}{Lemma}[section]

\newcommand{\prob}{\stackrel{P}{\longrightarrow}}
\newcommand{\eid}{\stackrel{d}{=}}
\newcommand{\one}{{\bf 1}}
\newcommand{\reals}{{\mathbb R}}
\newcommand{\bbr}{\reals}
\newcommand{\vep}{\varepsilon}

\def\Var{{\rm Var}}

\numberwithin{equation}{section}

\begin{document}

\title[Hill Estimator]{Asymptotic normality of Hill Estimator for truncated data}
\author[A. Chakrabarty]{Arijit Chakrabarty}
\address{Arijit Chakrabarty, Department of Mathematics, Indian Institute of Science, Bengaluru 560012, INDIA\\
Phone number: +91-8971868223\\
Fax: +91-80-23600146}
\email{arijit@math.iisc.ernet.in}

\begin{abstract} The problem of estimating the tail index from truncated data is addressed in \cite{chakrabarty:samorodnitsky:2009}. In that paper, a sample based (and hence random) choice of $k$ is suggested, and it is shown that the choice leads to a consistent estimator of the inverse of the tail index. In this paper, the second order behavior of the Hill estimator with that choice of $k$ is studied, under some additional assumptions. In the untruncated situation, it is well known that asymptotic normality of the Hill estimator follows from the assumption of second order regular variation of the underlying distribution. Motivated by this, we show the same in the truncated case in light of the second order regular variation.
\end{abstract}

\subjclass{62G32}
\keywords{ heavy tails, truncation, second order regular variation, Hill estimator, asymptotic normality\vspace{.5ex}}
\thanks{Research partially supported by the NSF grant ``Graduate and Postdoctoral Training in Probability and its Applications'' at
Cornell University and the Centenary Post Doctoral Fellowship at Indian Institute of Science.}
\maketitle

\section{Introduction} Distributions with a regularly varying tail are becoming increasingly important in nature. Lots of phenomena arising in fields like telecommunications, finance and insurance exhibit the presence of such distributions. Historically, one of the most important statistical issues related to distributions with regularly varying tail is estimating the tail index $\alpha$. A detailed discussion on estimators of the tail index can be found in Chapter 4 of \cite{dehaan:ferreira:2006}. One of the most popular estimators is the Hill estimator, introduced by \cite{hill:1975}. For a one-dimensional non-negative sample $X_1,\ldots,X_n$, the Hill statistic is defined as
\begin{equation}\label{hill.defn}
h(k,n):=\frac1k\sum_{i=1}^k\log\frac{X_{(i)}}{X_{(k)}}\,,
\end{equation}
where $X_{(1)}\ge\ldots\ge X_{(n)}$ are the order statistics of $X_1,\ldots,X_n$, and $1\le k\le n$ is an user determined parameter. It is well known that if $X_1,\ldots, X_n$ are a i.i.d. sample from a distribution whose tail is regularly varying with index $-\alpha$ and $k$ satisfies $1\ll k\ll n$, then $h(k,n)$ consistently estimates $\alpha^{-1}$.
In a sense made precise by \cite{mason:1982},  the consistency of Hill statistic  is equivalent to the regular variation of the tail of the underlying distribution. Various authors have studied the second order behavior of the Hill estimator; see for example  \cite{davis:resnick:1984}, \cite{csorgo:mason:1985}, \cite{hausler:teugels:1985}, \cite{goldie:smith:1987}, \cite{geluk:dehaan:resnick:starica:1997} and \cite{dehaan:resnick:1998} among others. It is well known that if the tail of the i.i.d. random variables $X_1,\ldots,X_n$ satisfies a stronger assumption than regularly varying with index $-\alpha$, known as second order regular variation, then
$$
\sqrt k\left(h(k,n)-\frac1\alpha\right)\Longrightarrow N\left(0,\frac1{\alpha^2}\right)\,.
$$

While there are real life phenomena that do exhibit the presence of heavy tails, in lot of the cases there is a physical upper bound on the possible values. For example most internet service providers put an upper bound on the size of a file that can be transferred using an internet connection provided by them. Clearly the natural model for such phenomena is a truncated heavy-tailed distribution, a distribution which fits a heavy-tailed distribution till a certain point and then decays significantly faster. This can be made precise in the following way. Suppose that $H,H_1,\ldots$ are i.i.d. random variables so that $P(H>\cdot)$ is regularly varying with index $-\alpha$, $\alpha>0$ and that $L,L_1,L_2,\ldots$ are i.i.d. random variables independent of $(H,H_1,H_2,\ldots)$. All these random variables are assumed to take values in the positive half line. We observe the sample $X_1,\ldots, X_n$ given by
\begin{equation}\label{model}
X_j:=H_j\one(H_j\le M_n)+(M_n+L_j)\one(H_j>M_n)\,,
\end{equation}
where  $M_n$, representing the truncating threshold, is a sequence of positive numbers going to infinity. Strictly speaking, the model is actually a triangular array $\{X_{nj}:1\le j\le n\}$. However, in practice we shall observe only one row of the triangular array, and hence we denote the sample by the usual notation $X_1,\ldots,X_n$.
The random variable $L$ can be thought of to have a much lighter tail, a tail decaying exponentially fast for example. However the results of this article are true under milder assumptions.

It was observed in Chakrabarty and Samorodnitsky (2009) that if the sequence $M_n$ goes to infinity slow enough so that
\begin{equation}\label{eq.ht}
\lim_{n\to\infty}nP(H>M_n)=\infty\,,
\end{equation}
then a priori choosing a $k$ so that the Hill estimator is consistent is a problem. In order to overcome that problem, the following sample based choice of $k$ was suggested in that paper:
\begin{equation}\label{khat}
\hat k_n:=\left[n\left(\frac1n\sum_{j=1}^n\one(X_j>\gamma X_{(1)})\right)^\beta\right]\,,
\end{equation}
where $\beta,\gamma\in(0,1)$ are user determined parameters. It has been shown in that article that this choice of $\hat k_n$ leads to a consistent estimator of $\alpha^{-1}$ when \eqref{eq.ht} is true, or when that limit is zero. In this paper, we investigate the second order behavior of $h(\hat k_n,n)$ under the assumption \eqref{eq.ht} and some additional assumptions. We hope to address the case when the corresponding limit is zero in future.

In Section \ref{sec:result}, it is shown that under some assumptions, the Hill estimator with $k=\hat k_n$ is asymptotically normal with mean $1/\alpha$. In Section \ref{sec:secondorder}, we connect the assumptions of Section \ref{sec:result} to the second order regular variation of the tail of $H$. In Section \ref{sec:practice}, we comment on the issues related to using the results of sections \ref{sec:result} and \ref{sec:secondorder} in practice, and suggest ways for getting around some of them.

\section{Asymptotic normality of the Hill estimator}\label{sec:result} Suppose that we have a one-dimensional non-negative sample $X_1,\ldots,X_n$ given by \eqref{model}. We shall assume the following throughout this section.

{\bf Assumption A:} There exists a sequence $(\vep_n)$ such that
\begin{eqnarray}
\lim_{n\to\infty}P(H>M_n)^{-(1-\beta)}\vep_n&=&0\,,\label{A.1}\\
\lim_{n\to\infty}nP(H>M_n)P(L>\vep_nM_n)&=&0\,,\label{A.2}\\
\mbox{and }\lim_{n\to\infty}P(H>M_n)^{-(1-\beta)}\left\{\frac{l\left(\gamma M_n(1+\vep_n)\right)}{l(\gamma M_n)}-1\right\}&=&0\,,\label{A.3}
\end{eqnarray}
where $l(x):=x^\alpha P(H>x)$.

{\bf Assumption B:} $\lim_{n\to\infty}nP(H>M_n)=\infty$.

{\bf Assumption C:} $\lim_{n\to\infty}nP(H>M_n)^{2-\beta}(\log M_n)^2=0$.

{\bf Assumption D:} For any sequence $(v_n)$ satisfying
\begin{equation}\label{D.eq1}
v_n\sim nP(H>\gamma M_n)^\beta\,,
\end{equation}
it holds that
$$
\lim_{n\to\infty}\sqrt{v_n}\left[\frac n{v_n}P\left(H>b(n/v_n)y^{-1/\alpha}\right)-y\right]=0
$$
uniformly on compact sets in $[0,\infty)$, where
\begin{equation}\label{bdefn}
b(y):=\inf\left\{x:\frac1{P(H>x)}\ge y\right\}\,.
\end{equation}

{\bf Assumption E:} For any sequence $(v_n)$ satisfying \eqref{D.eq1},
$$
\lim_{T\to\infty}\limsup_{n\to\infty}\sqrt{v_n}\int_T^\infty\left|\frac n{v_n}P\left(H>b(n/v_n)s\right)-s^{-\alpha}\right|\frac{ds}s=0\,.
$$

The main result of this section, Theorem \ref{t1}, describes the second order behavior of $h(\hat k_n,n)$, where $h(\cdot,\cdot)$ and $\hat k_n$ are as defined in \eqref{hill.defn} and \eqref{khat} respectively, under the assumptions A-E. Of course, these assumptions are hard to check in practice. However, in Section \ref{sec:secondorder}, we show that most of these can be verified if the tail of $H$ is second order regularly varying and some additional conditions are satisfied. One could thus state the hypothesis of Theorem \ref{t1} in terms of the second order regular variation. The only reason why we decided not to do that is the following. The simplest example of a distribution with a regularly varying tail is a Pareto, which is known to not satisfy the second order regular variation as defined in \cite{resnick:2007}. Hence, if Theorem \ref{t1} is stated in terms of second order regular variation, it will not entail simple examples of regularly varying distributions like Pareto, which clearly satisfy the assumptions A, D and E.

\begin{theorem}\label{t1}
Under assumptions A,B,C,D and E,
\begin{equation}\label{t1.claim}
\sqrt{\hat k_n}\left\{h(\hat k_n,n)-\frac1\alpha\right\}\Longrightarrow N\left(0,\frac1{\alpha^2}\right)\,.
\end{equation}
\end{theorem}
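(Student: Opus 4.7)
The plan is to reduce the analysis to the classical Hill CLT applied to the i.i.d.\ sample $H_1,\ldots,H_n$ with the random index $\hat k_n$, after extracting an explicit bias term that accounts for truncation. Set $N_n := \#\{j : H_j > M_n\}$. The key observation is that the $X_j$'s split into two disjoint groups: those with $H_j > M_n$ all exceed $M_n$, while those with $H_j \le M_n$ equal $H_j$ and are $\le M_n$. Consequently, sorted in decreasing order, $X_{(i)} = H_{(i)}$ for every $i > N_n$. Assumption B and $\beta<1$ give $N_n \sim nP(H>M_n) \ll nP(H>\gamma M_n)^\beta \sim \hat k_n$ in probability, so $X_{(\hat k_n)} = H_{(\hat k_n)}$ with probability tending to one. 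Splitting the Hill sum at $i=N_n$ and adding and subtracting $\sum_{i=1}^{N_n}\log(H_{(i)}/H_{(\hat k_n)})$ then yields the clean decomposition
\[
h(\hat k_n,n) \;=\; h_H(\hat k_n, n) \;+\; R_n,\qquad R_n := \frac{1}{\hat k_n}\sum_{i=1}^{N_n}\log\frac{X_{(i)}}{H_{(i)}},
\]
where $h_H$ denotes the Hill statistic formed from $H_1,\ldots,H_n$.

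I would then show $\sqrt{\hat k_n}\,R_n \to 0$ in probability. Writing $X_{(i)} = M_n + L^*_i$ (with $L^*_i$ the sorted $L_j$'s over the $j$ with $H_j>M_n$) and using $H_{(i)} > M_n$ for $i\le N_n$, one has $\log(X_{(i)}/H_{(i)}) = \log(1 + L^*_i/M_n) - \log(H_{(i)}/M_n)$. For the first piece, \eqref{A.2} forces $\max_{i\le N_n}L^*_i \le \vep_n M_n$ with probability tending to one, giving a sum of size $O_P(N_n\vep_n)$; division by $\sqrt{\hat k_n}$ produces $O_P(\sqrt{n}\,P(H>M_n)^{1-\beta/2}\vep_n)$, which vanishes since Assumption C implies $\sqrt{n}\,P(H>M_n)^{1-\beta/2}\to 0$ and $\vep_n\to 0$ by \eqref{A.1}. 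For the second piece, Karamata-type calculations show that $\{\log(H_j/M_n):H_j>M_n\}$ are, conditional on $N_n$, approximately i.i.d.\ $\mathrm{Exp}(\alpha)$, so their sum equals $N_n/\alpha + O_P(\sqrt{N_n})$. Dividing by $\sqrt{\hat k_n}$, the mean contribution is $O(\sqrt{n}\,P(H>M_n)^{1-\beta/2})\to 0$ by Assumption C, and the fluctuation is $O_P(\sqrt{N_n/\hat k_n}) = O_P(P(H>M_n)^{(1-\beta)/2}) \to 0$.

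It therefore suffices to prove $\sqrt{\hat k_n}(h_H(\hat k_n,n)-1/\alpha)\Longrightarrow N(0,1/\alpha^2)$, i.e.\ the classical Hill CLT for the i.i.d.\ sample $(H_j)$ with the random index $\hat k_n$. Setting $v_n := nP(H>\gamma M_n)^\beta$, I would first show $\hat k_n/v_n\to 1$ in probability with adequate precision: note that $\one(X_j>\gamma X_{(1)}) = 1$ whenever $H_j>M_n$, and equals $\one(H_j>\gamma X_{(1)})$ otherwise. Since $\gamma X_{(1)} = \gamma M_n(1+o_P(\vep_n))$ by \eqref{A.2}, Assumption A.3 absorbs the slowly varying perturbation in $P(H>\gamma X_{(1)})$, while a standard empirical-process bound handles the sampling fluctuation. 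With $\hat k_n$ thereby pinned near $v_n$, I would invoke the tail empirical process machinery for the $H_j$'s: the process $s\mapsto v_n^{-1/2}\sum_j\bigl[\one(H_j>b(n/v_n)s) - P(H>b(n/v_n)s)\bigr]$ converges weakly in an appropriate $D$-space to a centered Gaussian process, Assumption B giving $v_n\to\infty$ and Assumptions D and E serving precisely to dispose of the deterministic bias at rate $v_n^{-1/2}$ on compact sets and in the tail respectively. Expressing $h_H(\hat k_n,n) - 1/\alpha$ as a continuous functional (via a Karamata-type integral representation) of this empirical process and invoking continuous mapping delivers the CLT with asymptotic variance $1/\alpha^2$. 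The main technical hurdle will be the joint control of the random index $\hat k_n$ and the second-order bias: one must ensure that substituting $\hat k_n$ for $v_n$ inside the tail empirical process does not introduce any additional $O(1)$ bias, which is exactly where the delicate interplay between \eqref{A.1} and \eqref{A.3} becomes indispensable.
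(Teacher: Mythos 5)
Your decomposition $h(\hat k_n,n)=h_H(\hat k_n,n)+R_n$ is a genuinely different route from the paper, which never compares $X_{(i)}$ to $H_{(i)}$ directly; instead it conditions on $U_n=\#\{j:X_j>\gamma M_n\}$, works with the law of $H$ conditioned on $H\le\gamma M_n$, and handles the top order statistics through the crude bound $S_1/\sqrt{\tilde k_n}\le U_n\log(2M_n)/\sqrt{\tilde k_n}$. Your treatment of $R_n$ is sound and in fact sharper (the identity $\sum_{i\le N_n}\log(X_{(i)}/H_{(i)})=\sum_{j:H_j>M_n}[\log(1+L_j/M_n)-\log(H_j/M_n)]$ is correct because both sides sort the same index set, and \eqref{A.2}, Assumption C and Karamata do exactly what you say). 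The problem is in the last step.

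The gap is the transfer from a deterministic index to $\hat k_n$. You say that \eqref{A.1} and \eqref{A.3} ``pin $\hat k_n$ near $v_n$'' with ``adequate precision,'' where $v_n:=nP(H>\gamma M_n)^\beta$ is \emph{deterministic}. They cannot. Assumptions \eqref{A.1}--\eqref{A.3} control only the replacement of the threshold $\gamma M_n$ by the random $\gamma X_{(1)}$, i.e.\ the difference $V_n-U_n$; they say nothing about the binomial fluctuation of $U_n$ around its mean. That fluctuation gives $V_n/(nP(H>\gamma M_n))-1=O_P\bigl((nP(H>\gamma M_n))^{-1/2}\bigr)$, hence $\hat k_n/v_n-1=O_P\bigl((nP(H>\gamma M_n))^{-1/2}\bigr)$, and since $nP(H>\gamma M_n)\ll nP(H>\gamma M_n)^\beta=v_n$, this is of \emph{larger} order than $v_n^{-1/2}$. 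So the substitution $\hat k_n\mapsto v_n$ cannot be justified by a rate argument: the random index wanders over a window of width $\gg\sqrt{v_n}$ around $v_n$, exactly the scale of the fluctuations of the Hill statistic itself. To close this you would need a functional (in the index $s$) version of the Hill CLT, $s\mapsto\sqrt{v_n}\bigl(h_H([v_ns],n)-1/\alpha\bigr)$ converging in $D$ near $s=1$ to a continuous Gaussian process, followed by an Anscombe-type evaluation at $S_n=\hat k_n/v_n\prob1$; that is provable from the tail empirical process under D and E, but it is an additional piece of work your sketch does not contain and is not what your appeal to A.1/A.3 delivers. The paper sidesteps this entirely: it compares $\hat k_n$ only to the \emph{random} $\tilde k_n=[n^{1-\beta}U_n^\beta]$, for which $\sqrt{\hat k_n}(\hat k_n/\tilde k_n-1)\prob0$ does hold (this is where A.1--A.3 are genuinely used, via $U_n-V_n\le T_n$), and then conditions on $U_n=u_n$ so that the index of the remaining Hill sum is deterministic, transferring back by Lemma \ref{l5}. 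You should either adopt that conditioning device or prove the functional index-CLT explicitly.
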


The following is a brief outline of how we plan to prove this. Define
\begin{eqnarray*}
U_n&:=&\sum_{j=1}^n\one(X_j>\gamma M_n)\,,\\
V_n&:=&\sum_{j=1}^n\one(X_j>\gamma X_{(1)})\,,\\
\tilde k_n&:=&\left[n^{1-\beta}U_n^\beta\right]\,.
\end{eqnarray*}
Note that
$$
\hat k_n:=\left[n^{1-\beta}V_n^\beta\right]\,.
$$
Since we are dealing with a random sum, a natural way of proceeding is conditioning on the number of summands. However, conditioning on $V_n$ or $\hat k_n$ destroys the i.i.d. nature of the sample. Hence, we condition on $U_n=u_n$, where $(u_n)$ is any sequence of integers satisfying $u_n\sim nP(H>\gamma M_n)$. Lemma \ref{l5} is a general result, which allows us to claim weak convergence of the unconditional distribution based on that of the conditional distribution.
Clearly, by conditioning on $U_n$, $h(\tilde k_n,n)$ becomes the Hill statistic with a deterministic $k$ applied to a triangular array. The second order behavior of that is studied in Lemma \ref{l3}. In view of Lemma \ref{l5}, this translates to second order behavior of (the unconditional distribution of) $h(\tilde k_n,n)$. In order to argue the claim of Theorem \ref{t1}, all we need is showing that $h(\tilde k_n,n)$ and $h(\hat k_n,n)$ are not very far apart, and that is done in Lemma \ref{l2}. For Lemma \ref{l3} and Lemma \ref{l2}, we need that the tail empirical process, after suitable centering and scaling, converge to a Brownian Motion. This has been showed in Lemma \ref{l4}.

\begin{lemma}\label{l5} Suppose that $(B_n:n\ge1)$ is a sequence of discrete random variables satisfying
$$
\frac{B_n}{b_n}\prob1\,,
$$
for some deterministic sequence $(b_n)$. Assume that $(A_n:n\ge1)$ is a family of random variables such that whenever $\hat b_n$ is any deterministic sequence satisfying $\hat b_n\sim b_n$ as $n\longrightarrow\infty$ and $P(B_n=\hat b_n)>0$,
\begin{equation}\label{l5.eq1}
P(A_n\le\cdot|B_n=\hat b_n)\Longrightarrow F(\cdot)\,,
\end{equation}
for some c.d.f. $F$. Then $A_n\Longrightarrow F$.
\end{lemma}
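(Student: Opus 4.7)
The plan is to reduce the weak-convergence conclusion $A_n\Longrightarrow F$ to pointwise convergence $P(A_n\le x)\to F(x)$ at every continuity point $x$ of $F$, and then to prove the latter by contradiction: if it failed at some such $x$, one could extract a \emph{deterministic} sequence $\hat b_n\sim b_n$ with $P(B_n=\hat b_n)>0$ along which the conditional convergence assumed in \eqref{l5.eq1} also fails.

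Fix a continuity point $x$ of $F$, and set $f_n(k):=P(A_n\le x\mid B_n=k)$ on the support of $B_n$ (and zero elsewhere), so that
\begin{equation*}
P(A_n\le x) \;=\; \sum_k f_n(k)\,P(B_n=k).
\end{equation*}
Since $B_n/b_n\prob 1$, one can pick $\vep_n\downarrow 0$ slowly enough that $P(|B_n/b_n-1|>\vep_n)\to 0$, and define
\begin{equation*}
S_n:=\{k: |k/b_n-1|\le\vep_n,\ P(B_n=k)>0\}.
\end{equation*}
Suppose toward a contradiction that $P(A_n\le x)\not\to F(x)$. Replacing $f_n$ by $1-f_n$ if necessary, I may assume $\limsup_n P(A_n\le x)\ge F(x)+\delta$ for some $\delta>0$, so along some subsequence $(n_j)$ we have $P(A_{n_j}\le x)>F(x)+\delta/2$ and hence, for $j$ large,
\begin{equation*}
\sum_{k\in S_{n_j}} f_{n_j}(k)\,P(B_{n_j}=k) \;\ge\; P(A_{n_j}\le x)-P(|B_{n_j}/b_{n_j}-1|>\vep_{n_j}) \;>\; F(x)+\delta/4.
\end{equation*}
Because $f_{n_j}\le 1$ and $\sum_{k\in S_{n_j}} P(B_{n_j}=k)\le 1$, a pigeonhole argument produces some $\hat b_{n_j}\in S_{n_j}$ with $f_{n_j}(\hat b_{n_j})>F(x)+\delta/4$.

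For $n$ not in $(n_j)$, pick any $\hat b_n\in S_n$ (which is eventually nonempty since $P(B_n\in S_n)\to 1$), falling back to any atom of $B_n$ for the finitely many $n$ where $S_n$ is empty. The resulting deterministic sequence satisfies $\hat b_n\sim b_n$ because $|\hat b_n/b_n-1|\le\vep_n\to 0$, and $P(B_n=\hat b_n)>0$ by construction. Hence by \eqref{l5.eq1}, $f_n(\hat b_n)=P(A_n\le x\mid B_n=\hat b_n)\to F(x)$, contradicting $f_{n_j}(\hat b_{n_j})>F(x)+\delta/4$ along $(n_j)$.

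The only real difficulty lies in the pigeonhole step: the hypothesis controls $P(A_n\le x\mid B_n=\hat b_n)$ only for a single deterministic sequence at a time, whereas $P(A_n\le x)$ is a weighted average of the $f_n(k)$ over all atoms of $B_n$. The concentration of $B_n$ about $b_n$, coupled with a choice $\vep_n\downarrow 0$ that still captures nearly all of $B_n$'s mass, is what lets me localize the averaging onto one such atom while preserving $\hat b_n\sim b_n$.
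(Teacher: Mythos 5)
Your argument is correct: the reduction to pointwise convergence at continuity points, the localization of the sum $P(A_n\le x)=\sum_k f_n(k)P(B_n=k)$ onto the atoms in $S_n=\{k:|k/b_n-1|\le\vep_n\}$ via a diagonal choice of $\vep_n\downarrow0$, and the pigeonhole extraction of a single atom $\hat b_{n_j}$ with $f_{n_j}(\hat b_{n_j})>F(x)+\delta/4$ all go through (the supremum version of the pigeonhole step is what is really used when $S_{n_j}$ is infinite, but that is immediate), and the symmetric case is indeed handled by passing to $1-f_n$ and $1-F(x)$. The paper reaches the same conclusion by a different, more direct device: it first passes to a subsequence along which $B_n/b_n\to1$ almost surely, observes that for each fixed $\omega$ the realized path $n\mapsto B_n(\omega)$ is itself a deterministic sequence asymptotic to $b_n$ with $P(B_n=B_n(\omega))>0$, so that the hypothesis gives $f_n(B_n(\omega))\to F(x)$ pointwise in $\omega$, and then concludes $P(A_n\le x)=Ef_n(B_n)\to F(x)$ by bounded convergence. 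Both proofs exploit the same essential feature of the hypothesis --- that it quantifies over \emph{all} deterministic sequences asymptotic to $b_n$ --- but the paper's route converts the random index into a deterministic one realization by realization and lets dominated convergence do the averaging, whereas yours runs the argument contrapositively, manufacturing a single offending deterministic sequence from a failure of the averaged convergence. The paper's version is shorter and avoids the explicit $\vep_n$-truncation and pigeonhole bookkeeping; yours is more elementary in that it never invokes almost-sure subsequence representations, and it makes visible exactly where the concentration of $B_n$ is used.
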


\begin{proof} It suffices to show that every subsequence of $(A_n)$ has a further subsequence that converges weakly to $F$. Since every sequence that converges in probability has a subsequence that converges almost surely, we can assume without loss of generality that
\begin{equation}\label{l5.eq2}
\frac{B_n}{b_n}\longrightarrow1\mbox{ a.s.}\,.
\end{equation}
Fix a continuity point $x$ of $F$ and define a function $f_n:\bbr\longrightarrow[0,1]$ by
$$
f_n(u)=\left\{\begin{array}{ll}\frac{P(A_n\le x,B_n=u)}{P(B_n=u)},&\mbox{if }P(B_n=u)>0\\0,&\mbox{otherwise.}\end{array}\right.
$$
Clearly, for all $n\ge1$,
$$
P(A_n\le x)=Ef_n(B_n)\,.
$$
By \eqref{l5.eq1} and \eqref{l5.eq2}, it follows that
$$
f_n(B_n)\longrightarrow F(x)\mbox{ a.s.}\,.
$$
By the bounded convergence theorem, it follows that
$$
\lim_{n\to\infty}Ef_n(B_n)=F(x)\,,
$$
and this completes the proof.
\end{proof}

Throughout this section,  assumptions A, B, C, D and E will be in force.

\begin{lemma}\label{l4}Suppose that $(u_n)$ is a sequence of integers satisfying
\begin{equation}\label{l4.eq1}
u_n\sim nP(H>\gamma M_n)\,,
\end{equation}
and let
\begin{eqnarray}
\label{l2.eq2}
v_n&:=&[n^{1-\beta}u_n^\beta]-u_n\,,\\
\tilde M_n&:=&\gamma M_n\,.\label{l2.eq4}
\end{eqnarray}
Let for $n\ge1$, $Y_{n,1},\ldots, Y_{n,n}$ be i.i.d. with c.d.f. $F_n$, defined as
$$
F_n(x):=P(H\le x|H\le {\tilde M_n})\,.
$$
Then,
\begin{equation}\label{l3.eq1}
\sqrt{v_n}\left(\frac1{v_n}\sum_{i=1}^{n-u_n}\delta_{Y_{n-u_n,i}/b((n-u_n)/v_n)}(y^{-1/\alpha},\infty]-y\right)\Longrightarrow W(y)
\end{equation}
in $D[0,\infty)$, where $D[0,\infty)$ is endowed with the topology of uniform convergence on compact sets and $W$ is the standard Brownian Motion on $[0,\infty)$.
\end{lemma}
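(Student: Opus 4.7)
The plan is to reduce Lemma~\ref{l4} to the classical tail empirical functional CLT for the un-truncated sample $H_1,H_2,\dots$ through a coupling whose error is controlled by Assumption~C, and then to invoke Assumptions~D and~E, which encode precisely the second-order and tail conditions needed for convergence to Brownian motion. Write $m_n:=n-u_n$ and $t_n:=m_n/v_n$. Since $u_n/n\sim P(H>\gamma M_n)\to0$ and $\beta\in(0,1)$, a direct calculation gives $v_n\sim nP(H>\gamma M_n)^\beta$ (so $v_n$ satisfies \eqref{D.eq1}), $m_n\sim n$, and $1/t_n\sim P(H>\gamma M_n)^\beta$, which for $\beta<1$ is far larger than $P(H>\tilde M_n)=P(H>\gamma M_n)$. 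By monotonicity of $b$ this forces $b(t_n)\le\tilde M_n$, and more generally $b(t_n)y^{-1/\alpha}\le\tilde M_n$ for all large $n$, uniformly over $y$ in any compact subset of $(0,\infty)$.

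Next, couple $Y_{m_n,1},\dots,Y_{m_n,m_n}$ to an i.i.d.\ sample $H_1,\dots,H_{m_n}$ from the law of $H$ by setting $Y_{m_n,i}=H_i$ on $\{H_i\le\tilde M_n\}$ and drawing $Y_{m_n,i}$ independently from $F_n$ otherwise. A brief case analysis (splitting on whether $b(t_n)y^{-1/\alpha}$ is above or below $\tilde M_n$) shows that, uniformly in $y\in[0,\infty)$,
\begin{equation*}
\left|\sum_{i=1}^{m_n}\bigl[\one(Y_{m_n,i}>b(t_n)y^{-1/\alpha})-\one(H_i>b(t_n)y^{-1/\alpha})\bigr]\right|\;\le\;\sum_{i=1}^{m_n}\one(H_i>\tilde M_n).
\end{equation*}
Divided by $\sqrt{v_n}$, the right-hand side has expectation of order $\sqrt n\,P(H>\gamma M_n)^{1-\beta/2}$, which vanishes by Assumption~C (even without the $(\log M_n)^2$ factor). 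Thus the coupling error is asymptotically negligible in the uniform topology, and it suffices to prove \eqref{l3.eq1} with $Y_{m_n,i}$ replaced by $H_i$.

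For the i.i.d.\ $H_i$ process I would decompose
\begin{equation*}
\sqrt{v_n}\!\left(\frac{1}{v_n}\sum_{i=1}^{m_n}\one(H_i>b(t_n)y^{-1/\alpha})-y\right)=\tilde e_n(y)+r_n(y),
\end{equation*}
where $\tilde e_n$ is the empirical process centered at its mean and
\begin{equation*}
r_n(y):=\sqrt{v_n}\left[\frac{m_n}{v_n}P\bigl(H>b(t_n)y^{-1/\alpha}\bigr)-y\right].
\end{equation*}
Setting $v_n':=v_n\cdot n/m_n\sim v_n$, one has $n/v_n'=t_n$ and $v_n'$ still satisfies \eqref{D.eq1}, so Assumption~D applied to $(v_n')$ gives $r_n\to0$ uniformly on compacts of $[0,\infty)$. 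For $\tilde e_n$, the indicator events are nested in $y$, so a direct calculation yields $\Cov(\tilde e_n(y_1),\tilde e_n(y_2))\to y_1\wedge y_2$, matching standard Brownian motion; finite-dimensional convergence then follows from the Lindeberg CLT for row sums of independent Bernoullis, and tightness on each $[0,T]$ from the standard second-moment increment bound $E[(\tilde e_n(y_2)-\tilde e_n(y_1))^2]=O(y_2-y_1)$.

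The main obstacle is the joint tightness of $\tilde e_n+r_n$ in a neighborhood of $y=0$, where $y^{-1/\alpha}=\infty$ and one is summing indicators of ever-rarer events. On the deterministic side Assumption~E, under the change of variables $s=y^{-1/\alpha}$, is precisely what is needed to control $r_n$ uniformly on $(0,\delta]$; on the stochastic side matching control comes from a standard maximal inequality for the tail empirical process of a regularly varying distribution. Combining these ingredients with the coupling and the bias/variance decomposition yields \eqref{l3.eq1}.
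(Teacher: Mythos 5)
Your route is genuinely different from the paper's. You couple the truncated sample to the untruncated $H_i$'s and reduce the lemma to the classical tail empirical FCLT for i.i.d.\ regularly varying data; the paper instead works directly with the truncated array, writing $\bar F_n(x)=\bigl(P(H>x)-P(H>\tilde M_n)\bigr)/P(H\le\tilde M_n)$, representing the truncated tail empirical measure through a unit--rate Poisson process via $\sum_{i=1}^{w_n}\delta_{Y_{w_n,i}/b(w_n/v_n)}(y^{-1/\alpha},\infty]\eid\sum_{i=1}^{w_n}\one(\Gamma_i\le v_n\phi_n(y))$, and transferring the known FCLT for $s\mapsto\sum_i\one(\Gamma_i\le v_ns)$ through the time change $\phi_n(s)\to s$. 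Your coupling step is correct, and it isolates exactly the same quantity the paper controls in \eqref{l3.hypo1}: the coupling error is at most $\sum_i\one(H_i>\tilde M_n)$, whose mean over $\sqrt{v_n}$ is of order $w_nP(H>\tilde M_n)/\sqrt{v_n}\to0$ by Assumption C. Your handling of the deterministic bias by applying Assumption D to $v_n'=v_n\,n/m_n\sim v_n$ is also fine.

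Two criticisms. The substantive one concerns tightness of the centered process: $E\bigl[(\tilde e_n(y_2)-\tilde e_n(y_1))^2\bigr]=O(|y_2-y_1|)$ does \emph{not} yield tightness. A second-moment increment bound with exponent one controls individual increments but not the modulus of continuity; the Kolmogorov--Chentsov and Billingsley criteria need an exponent strictly larger than one in the increment, or a mixed fourth-moment bound over adjacent increments of the form $E\bigl[(\tilde e_n(y_2)-\tilde e_n(y_1))^2(\tilde e_n(y_1)-\tilde e_n(y_0))^2\bigr]\le C|y_2-y_0|^2$ (which is how one proves Donsker's theorem for the ordinary empirical process). This is precisely the step the Poisson/renewal representation is designed to bypass: in the paper tightness is inherited from the partial-sum process $\sqrt{v_n}\bigl(v_n^{-1}\sum_i\one(\Gamma_i\le v_ns)-s\bigr)$, whose FCLT is classical. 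The cleanest repair of your argument is not to reprove the FCLT for the $H_i$'s at all but to cite Theorem 9.1 of \cite{resnick:2007} for the i.i.d.\ untruncated sample $H_1,\dots,H_{m_n}$ with $k=v_n$; its hypothesis is exactly the locally uniform second-order condition supplied by Assumption D. The second, minor, point is that Assumption E plays no role in this lemma: the claimed convergence is in $D[0,\infty)$ with the local uniform topology, so only $y\in[0,T]$ matters, and Assumption D is already stated as uniform on compact subsets of $[0,\infty)$, hence covers $y$ near $0$ (equivalently $s=y^{-1/\alpha}$ large). Assumption E is the integral condition needed later, in Lemma \ref{l3}, to justify applying $f\mapsto\int_1^\infty f(s)\,\frac{ds}{s}$ to the tail empirical process.
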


\begin{proof}For simplicity sake, denote $w_n:=n-u_n$.
It is easy to see by assumptions B and C that
\begin{equation}\label{l3.hypo1}
1\ll w_nP(H>{\tilde M_n})\ll\sqrt{v_n}\ll\sqrt {w_n}\,.
\end{equation}
Let $(\Gamma_i:i\ge1)$ be the arrivals of a unit rate Poisson Process. Define
$$
\phi_n(s):=\frac{\Gamma_{w_n+1}}{v_n}\bar F_n(s^{-1/\alpha}b(w_n/v_n))\,,
$$
where $\bar G:=1-G$ for any function $G$. By the discussion on page 24 in \cite{resnick:2007}, it follows that
\begin{equation}\label{l3.eq10}
\lim_{n\to\infty}\frac {w_n}{v_n}P(H>b(w_n/v_n))=1\,.
\end{equation}
It follows by \eqref{l3.hypo1} that
$$
\lim_{n\to\infty}\frac {w_n}{v_n}P(H>{\tilde M_n})=0\,.
$$
This in conjunction with \eqref{l3.eq10} implies that
$$
b(w_n/v_n)=o({\tilde M_n})\,.
$$
It is easy to see that $v_n$ satisfies \eqref{D.eq1}.
Hence,  for $n$ large enough,
\begin{eqnarray*}
&&\frac {w_n}{v_n}\bar F_n(s^{-1/\alpha}b(w_n/v_n))-s\\
&=&\frac1{P(H\le {\tilde M_n})}\biggl[\frac {w_n}{v_n}P\left(H>s^{-1/\alpha}b(w_n/v_n)\right)-\frac {w_n}{v_n}P(H>{\tilde M_n})\\
&&\,\,\,\,-s+sP(H>{\tilde M_n})\biggr]\,,
\end{eqnarray*}
and hence in view of Assumption D and \eqref{l3.hypo1}, it follows that for $0<T<\infty$,
\begin{equation}\label{l3.eq2}
\lim_{n\to\infty}\sqrt{v_n}\sup_{0\le s\le T}\left|\frac {w_n}{v_n}\bar F_n(s^{-1/\alpha}b(w_n/v_n))-s\right|=0\,.
\end{equation}
Also note that,
\begin{eqnarray*}
&&\sup_{0\le s\le T}\left|\phi_n(s)-\frac {w_n}{v_n}\bar F_n(s^{-1/\alpha}b(w_n/v_n))\right|\\
&=&\left|\frac{\Gamma_{w_{n}+1}}{w_n}-1\right|\frac {w_n}{v_n}\bar F_n(T^{-1/\alpha}b(w_n/v_n))\\
&=&O_p(w_n^{-1/2})O(1)\\
&=&o_p(v_n^{-1/2})\,.
\end{eqnarray*}
This in conjunction with \eqref{l3.eq2} shows that
\begin{equation}\label{l3.eq3}
\sqrt{v_n}\left(\phi_n(s)-s\right)\prob0
\end{equation}
in $D[0,\infty)$. Recall that since $1\ll v_n\ll w_n$, in $D[0,\infty)$,
$$
\sqrt{v_n}\left(\frac1{v_n}\sum_{i=1}^{w_n}\one\left(\Gamma_i\le v_ns\right)-s\right)\Longrightarrow W(s)\,;
$$
see (9.7), page 294 in \cite{resnick:2007}.
Hence, it follows by the continuous mapping theorem and Slutsky's theorem that
\begin{equation}\label{l3.eq4}
\sqrt{v_n}\left(\frac1{v_n}\sum_{i=1}^{w_n}\one\left(\Gamma_i\le v_n\phi_n(s)\right)-\phi_n(s)\right)\Longrightarrow W(s)
\end{equation}
in $D[0,\infty)$. By similar arguments as those in the proof of Theorem 9.1 in \cite{resnick:2007}, it follows that
$$
\sum_{i=1}^{w_n}\delta_{Y_{w_n,i}/b(w_n/v_n)}(y^{-1/\alpha},\infty]\eid\sum_{i=1}^{w_n}\one\left(\Gamma_i\le v_n\phi_n(s)\right)\,.
$$
This along with \eqref{l3.eq3} and \eqref{l3.eq4} shows \eqref{l3.eq1}.
\end{proof}

\begin{lemma}\label{l3} Let $(u_n)$ be a sequence of integers satisfying \eqref{l4.eq1} and let $(v_n)$ and $(\tilde M_n)$ be as defined in \eqref{l2.eq2} and \eqref{l2.eq4} respectively.
Then,
$$
\sqrt{v_n}\left(\frac1{v_n}\sum_{i=1}^{v_n}\log\frac{Y_{(n-u_n,i)}}{Y_{(n-u_n,v_n)}}-\frac1\alpha\right)\Longrightarrow N\left(0,\frac1{\alpha^2}\right)\,,
$$
where $Y_{(n,1)}\ge\ldots\ge Y_{(n,n)}$ are the order statistics of $Y_{n,1},\ldots, Y_{n,n}$, and the latter is as defined in Lemma \ref{l4}.
\end{lemma}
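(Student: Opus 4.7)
The plan is to express $\sqrt{v_n}(H_n-1/\alpha)$, where $H_n:=v_n^{-1}\sum_{i=1}^{v_n}\log(Y_{(w_n,i)}/Y_{(w_n,v_n)})$ is the Hill statistic in the claim and $w_n:=n-u_n$, as a continuous functional of the centered tail empirical process
$$
Z_n(y):=\sqrt{v_n}(T_n(y)-y),\qquad T_n(y):=\frac1{v_n}\sum_{j=1}^{w_n}\one(Y_{w_n,j}>b_ny^{-1/\alpha}),
$$
with $b_n:=b(w_n/v_n)$. Lemma \ref{l4} gives $Z_n\Rightarrow W$ in $D[0,\infty)$ uniformly on compacts. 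Set also $R_n:=Y_{(w_n,v_n)}/b_n$ and $s_n:=R_n^{-\alpha}$.

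First, I would derive an integral representation. Writing $\log(a/b)=\int_1^\infty\one(t\le a/b)\,dt/t$, swapping sum and integral, and substituting $y=(R_nt)^{-\alpha}$ so that $dt/t=-dy/(\alpha y)$, one obtains
$$
\alpha H_n=\int_0^{s_n}\frac{T_n(y)}{y}\,dy,
$$
and, after adding and subtracting $y$ in the numerator,
$$
\alpha H_n-1=\int_0^{s_n}\frac{T_n(y)-y}{y}\,dy+(s_n-1).
$$
Since $T_n$ jumps by $1/v_n$ and crosses $1$ at $s_n$, $|T_n(s_n)-1|\le 1/v_n$, so $\sqrt{v_n}(s_n-1)=-Z_n(s_n)+O(1/\sqrt{v_n})$, and therefore
$$
\sqrt{v_n}(\alpha H_n-1)=\int_0^{s_n}\frac{Z_n(y)}{y}\,dy-Z_n(s_n)+o(1).
$$

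Next, fix $\delta\in(0,1)$. Since $T_n$ is monotone, Lemma \ref{l4} together with a Vervaat-type argument gives $s_n\to 1$ in probability and joint convergence $(Z_n|_{[\delta,2]},s_n)\Rightarrow(W|_{[\delta,2]},1)$, and the continuous mapping theorem yields
$$
\int_\delta^{s_n}\frac{Z_n(y)}{y}\,dy-Z_n(s_n)\Rightarrow\int_\delta^1\frac{W(y)}{y}\,dy-W(1).
$$

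The main obstacle is controlling the residual $\int_0^\delta Z_n(y)/y\,dy$ uniformly in large $n$ and showing it is negligible as $\delta\downarrow 0$. I would decompose $Z_n=Z_n^{\mathrm{bias}}+Z_n^{\mathrm{stoch}}$ with $Z_n^{\mathrm{bias}}(y):=\sqrt{v_n}(ET_n(y)-y)$. The substitution $s=y^{-1/\alpha}$ converts $\int_0^\delta|Z_n^{\mathrm{bias}}(y)|/y\,dy$ into a multiple of the integral in Assumption E (using $w_n\sim n$ and the definition of $\bar F_n$ to pass from $w_n\bar F_n$ to $nP(H>\cdot)$), hence it vanishes as $n\to\infty$. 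For the centered stochastic part, $\Cov(Z_n^{\mathrm{stoch}}(y_1),Z_n^{\mathrm{stoch}}(y_2))$ is asymptotically $\min(y_1,y_2)$, and the identity $\int_0^\delta\int_0^\delta\min(y_1,y_2)/(y_1y_2)\,dy_1dy_2=2\delta$ yields $E[\int_0^\delta Z_n^{\mathrm{stoch}}(y)/y\,dy]^2=O(\delta)$ uniformly in $n$. Letting $\delta\downarrow 0$ after $n\to\infty$,
$$
\sqrt{v_n}(\alpha H_n-1)\Rightarrow\int_0^1\frac{W(y)}{y}\,dy-W(1),
$$
and a covariance calculation ($2-2\cdot 1+1=1$) identifies the right-hand side as $N(0,1)$; dividing by $\alpha$ yields the claimed $N(0,1/\alpha^2)$.
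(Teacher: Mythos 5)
Your proposal follows essentially the same route as the paper: the tail empirical process of Lemma \ref{l4}, a Vervaat-type step for the intermediate order statistic $Y_{(w_n,v_n)}$, an integral representation of the Hill statistic, and a split of the dangerous end of the integral into a bias part (killed by Assumption E) and a centered stochastic part (killed by a moment bound). The differences are cosmetic: you work in the $y=x^{-\alpha}$ parametrization and bound the stochastic remainder in $L^2$ via the $\min(y_1,y_2)$ covariance, whereas the paper works in the $x$-scale, applies the map $\psi(f)=\int_1^\infty f(s)\,ds/s$, and bounds the remainder in $L^1$ by Cauchy--Schwarz pointwise; both need a Potter-bound domination to make the bound uniform in $n$ (your ``asymptotically $\min(y_1,y_2)$'' should be upgraded to a uniform bound $C\min(y_1,y_2)^{1-\epsilon}$ for this reason). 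The one place where you understate real work is the parenthetical ``using $w_n\sim n$ and the definition of $\bar F_n$ to pass from $w_n\bar F_n$ to $nP(H>\cdot)$'': since $Y_{n,i}$ has the truncated law, $\bar F_n(x)=\bigl(P(H>x)-P(H>\tilde M_n)\bigr)/P(H\le\tilde M_n)$ for $x\le\tilde M_n$ and $\bar F_n\equiv 0$ beyond $\tilde M_n$, so this replacement produces two extra error terms (the paper's $I_2$ and $I_3$): one of size $O\bigl(\tfrac{w_n}{v_n}P(H>\tilde M_n)\log(\tilde M_n/b(w_n/v_n))\bigr)$, which is $o(v_n^{-1/2})$ only because of Assumption C, and one equal to $\tfrac{w_n}{v_n}\int_{\tilde M_n}^\infty P(H>u)\,du/u$, which needs Karamata's theorem and the estimate $w_nP(H>\tilde M_n)\ll\sqrt{v_n}$. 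These are exactly the points at which Assumptions B and C enter the lemma, so they cannot be absorbed into ``the definition of $\bar F_n$''; once they are supplied, your argument is complete and matches the paper's.
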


\begin{proof}Once again, let us denote $w_n:=n-u_n$. An application of Vervaat's lemma (Proposition 3.3 in \cite{resnick:2007}) to \eqref{l3.eq1} shows that
\begin{equation}\label{l3.eq5}
\sqrt{v_n}\left[\left\{\frac{Y_{(w_n,v_n)}}{b(w_n/v_n)}\right\}^{-\alpha}-1\right]\Longrightarrow-W(1)
\end{equation}
jointly with \eqref{l3.eq1}. This in particular, shows that
$$
\left(\sqrt{v_n}\left\{\frac1{v_n}\sum_{i=1}^{w_n}\delta_{Y_{w_n,i}/b(w_n/v_n)}(x,\infty]-x^{-\alpha}\right\},\frac{Y_{(w_n,v_n)}}{b(w_n/v_n)}\right)
$$
$$
\Longrightarrow(W(x^{-\alpha}),1)\,,
$$
in $D(0,\infty]\times\bbr$, jointly with \eqref{l3.eq5}, where $D(0,\infty]$ is also endowed with the topology of uniform convergence on compact sets. Using the continuous mapping theorem, it follows that
$$
\sqrt{v_n}\left\{\frac1{v_n}\sum_{i=1}^{w_n}\delta_{Y_{w_n,i}/Y_{(w_n,v_n)}}(x,\infty]-x^{-\alpha}\frac{Y_{(w_n,v_n)}^{-\alpha}}{b(w_n/v_n)^{-\alpha}}\right\}
$$
\begin{equation}\label{l3.eq6}
\Longrightarrow W(x^{-\alpha})\,,
\end{equation}
in $D(0,\infty]$, jointly with \eqref{l3.eq5}. As in the proof of Proposition 9.1 in \cite{resnick:2007}, we shall apply the map $\psi$ from $D(0,\infty]$ to $\bbr$, defined by
$$
\psi(f):=\int_1^\infty f(s)\frac{ds}s\,,
$$
to conclude that
\begin{equation}\label{l3.eq7}
\sqrt{v_n}\left\{\frac1{v_n}\sum_{i=1}^{v_n}\log\frac{Y_{(w_n,i)}}{Y_{(w_n,v_n)}}-\frac1\alpha\frac{Y_{(w_n,v_n)}^{-\alpha}}{b(w_n/v_n)^{-\alpha}}\right\}\Longrightarrow\int_1^\infty W(x^{-\alpha})\frac{dx}x\,,
\end{equation}
jointly with \eqref{l3.eq5}. This implies that
$$
\sqrt{v_n}\left\{\frac1{v_n}\sum_{i=1}^{v_n}\log\frac{Y_{(n,i)}}{Y_{(n,v_n)}}-\frac1\alpha\right\}\Longrightarrow\int_1^\infty W(x^{-\alpha})\frac{dx}x-\frac1\alpha W(1)
$$
as desired. Thus, it suffices to show \eqref{l3.eq7}.

To that end, note that for $1<T<\infty$, the map $\psi_T$, defined by
$$
\psi_T(f):=\int_1^Tf(s)\frac{ds}s
$$
is continuous and has compact support. Also, as $T\longrightarrow\infty$,
$$
\psi_T(W(s^{-\alpha}))\Longrightarrow\psi(W(s^{-\alpha}))\,.
$$
Some calculations will show that $\psi$ applied to the left hand side of \eqref{l3.eq6} gives the left hand side of \eqref{l3.eq7}. Thus, all that needs to be done is justifying the application of $\psi$ to \eqref{l3.eq6}, and for that, it suffices to check that for all $\epsilon>0$,
\begin{eqnarray*}
&&
\lim_{T\to\infty}\limsup_{n\to\infty}P\biggl[\sqrt{v_n}\int_T^\infty\biggl|\frac1{v_n}\sum_{i=1}^{w_n}\delta_{Y_{w_n,i}/Y_{(w_n,v_n)}}(x,\infty]\\
&&\,\,\,\,\,\,\,\,\,\,\,\,\,\,-x^{-\alpha}\frac{Y_{(w_n,v_n)}^{-\alpha}}{b(w_n/v_n)^{-\alpha}}\biggr|\frac{dx}x>\epsilon\biggr]=0\,.
\end{eqnarray*}
Note that on the set $\{Y_{(w_n,v_n)}/b(w_n/v_n)>1/2\}$,
\begin{eqnarray*}
&&\int_T^\infty\biggl|\frac1{v_n}\sum_{i=1}^{w_n}\delta_{Y_{w_n,i}/Y_{(w_n,v_n)}}(x,\infty]
-x^{-\alpha}\frac{Y_{(w_n,v_n)}^{-\alpha}}{b(w_n/v_n)^{-\alpha}}\biggr|\frac{dx}x\\
&=&\int_{TY_{(w_n,v_n)}/b(w_n/v_n)}^\infty\left|\frac1{v_n}\sum_{i=1}^{w_n}\delta_{Y_{w_n,i}/b(w_n/v_n)}(u,\infty]-u^{-\alpha}\right|\frac{du}u\\
&\le&\int_{T/2}^\infty\left|\frac1{v_n}\sum_{i=1}^{w_n}\delta_{Y_{w_n,i}/b(w_n/v_n)}(u,\infty]-u^{-\alpha}\right|\frac{du}u\,.
\end{eqnarray*}
Since $P[Y_{(w_n,v_n)}/b(w_n/v_n)\le1/2]$ goes to zero, it suffices to show that
\begin{eqnarray}
&&\lim_{T\to\infty}\limsup_{n\to\infty}P\biggl[\sqrt{v_n}\int_{T/2}^\infty\biggl|\frac1{v_n}\sum_{i=1}^{w_n}\delta_{Y_{w_n,i}/b(w_n/v_n)}(u,\infty]\nonumber\\
\label{l3.eq8}
&&\,\,\,\,\,\,\,\,\,\,\,\,\,\,-u^{-\alpha}\biggr|\frac{du}u>\epsilon\biggr]=0\,.
\end{eqnarray}
Clearly,
\begin{eqnarray*}
&&\int_{T/2}^\infty\left|\frac1{v_n}\sum_{i=1}^{w_n}\delta_{Y_{w_n,i}/b(w_n/v_n)}(u,\infty]-u^{-\alpha}\right|\frac{du}u\\
&\le&\int_{T/2}^{\infty}\left|\frac1{v_n}\sum_{i=1}^{w_n}\delta_{Y_{w_n,i}/b(w_n/v_n)}(u,\infty]-\frac {w_n}{v_n}\bar F_n\left(ub(w_n/v_n)\right)\right|\frac{du}u\\
&&+\frac {w_n}{v_n}\int_{T/2}^\infty\left|\bar F_n\left(ub(w_n/v_n)\right)-P\left(H>ub(w_n/v_n)\right)\right|\frac{du}u\\
&&+\int_{T/2}^\infty\left|\frac {w_n}{v_n}P\left(H>ub(w_n/v_n)\right)-u^{-\alpha}\right|\frac{du}u
\end{eqnarray*}
\begin{eqnarray*}
&=&\int_{T/2}^\infty\left|\frac1{v_n}\sum_{i=1}^{w_n}\delta_{Y_{w_n,i}/b(w_n/v_n)}(u,\infty]-\frac {w_n}{v_n}\bar F_n\left(ub(w_n/v_n)\right)\right|\frac{du}u\\
&&+\frac {w_n}{v_n}\int_{T/2}^{{\tilde M_n}/b(w_n/v_n)}\left|\bar F_n\left(ub(w_n/v_n)\right)-P\left(H>ub(w_n/v_n)\right)\right|\frac{du}u\\
&&+\frac {w_n}{v_n}\int_{{\tilde M_n}}^\infty P(H>u)\frac{du}u\\
&&+\int_{T/2}^\infty\left|\frac {w_n}{v_n}P\left(H>ub(w_n/v_n)\right)-u^{-\alpha}\right|\frac{du}u\\
&=:&I_1+I_2+I_3+I_4\,.
\end{eqnarray*}

Since $v_n$ is defined by \eqref{l2.eq2}, \eqref{D.eq1} holds. By Assumption E, it follows that
$$
\lim_{T\to\infty}\limsup_{n\to\infty}\sqrt{v_n}I_4=0\,.
$$
Karamata's theorem (Theorem VIII.9.1, page 281 in \cite{feller:1971}) implies that
$$
I_3=O\left(\frac {w_n}{v_n}P(H>{\tilde M_n})\right)=o\left(v_n^{-1/2}\right)\,,
$$
the second equality following from \eqref{l3.hypo1}. For $I_2$, note that
\begin{eqnarray*}
&&\bar F_n\left(ub(w_n/v_n)\right)-P\left(H>ub(w_n/v_n)\right)\\
&=&-\frac{P(H>{\tilde M_n})P\left(H\le ub(w_n/v_n)\right)}{P(H\le {\tilde M_n})}\,.
\end{eqnarray*}
Also, it is easy to see from assumption C that
\begin{equation}\label{l3.hypo2}
\lim_{n\to\infty}\frac{w_nP(H>{\tilde M_n})}{\sqrt v_n}\log\left\{\frac{{\tilde M_n}}{b(w_n/v_n)}\right\}=0\,.
\end{equation}
Thus,
$$
I_2=O\left(\frac {w_n}{v_n}P(H>{\tilde M_n})\log\frac{{\tilde M_n}}{b(w_n/v_n)}\right)=o\left(v_n^{-1/2}\right)\,,
$$
the second equality following from \eqref{l3.hypo2}.

Thus, all that remains is showing
\begin{equation}\label{l3.eq9}
\lim_{T\to\infty}\limsup_{n\to\infty}P[\sqrt{v_n}I_1>\epsilon]=0\,.
\end{equation}
Notice that
$$
E\left[\frac1{v_n}\sum_{i=1}^{w_n}\delta_{Y_{w_n,i}/b(w_n/v_n)}(u,\infty]\right]=\frac {w_n}{v_n}\bar F_n\left(ub(w_n/v_n)\right)\,.
$$
Letting $C$ to be a finite positive constant independent of $n$, whose value may change from line to line,
\begin{eqnarray*}
&&P[\sqrt{v_n}I_1>\epsilon]\\
&\le&\frac{\sqrt{v_n}}{\epsilon}E(I_1)\\
&=&C{\sqrt{v_n}}\int_{T/2}^\infty E\left|\frac1{v_n}\sum_{i=1}^{w_n}\delta_{Y_{w_n,i}/b(w_n/v_n)}(u,\infty]-\frac {w_n}{v_n}\bar F_n\left(ub(w_n/v_n)\right)\right|\frac{du}u\\
&\le&C{\sqrt{v_n}}\int_{T/2}^\infty\Var\left[\frac1{v_n}\sum_{i=1}^{w_n}\delta_{Y_{w_n,i}/b(w_n/v_n)}(u,\infty]\right]^{1/2}\frac{du}u\\
&\le&C\frac{\sqrt{w_n}}{\sqrt{v_n}}\int_{T/2}^\infty\bar F_n\left(ub(w_n/v_n)\right)^{1/2}\frac{du}u\\
&\le&C\int_{T/2}^\infty\frac{\sqrt {w_n}}{\sqrt{v_n}}P\left(H>ub(w_n/v_n)\right)^{1/2}\frac{du}u\,.\\
\end{eqnarray*}
By \eqref{l3.eq10}, the integrand clearly converges to $u^{-\alpha/2}$ as $n\longrightarrow\infty$. By \eqref{bdefn}, the integrand is bounded above by
$$
\left[\frac{P(H>ub(w_n/v_n))}{P(H>b(w_n/v_n))}\right]^{1/2}\,,
$$
which by the Potter bounds (Proposition 2.6 in \cite{resnick:2007}) is bounded above by
$
2u^{-\alpha/3}
$
for $n$ large enough. An appeal to the dominated convergence theorem shows  \eqref{l3.eq9} and thus completes the proof.
\end{proof}

\begin{lemma}\label{l2} As $n\longrightarrow\infty$,
\begin{equation}\label{l2.claim}
\sqrt{\tilde k_n}\left\{h(\tilde k_n,n)-h(\hat k_n,n)\right\}\prob0\,.
\end{equation}
\end{lemma}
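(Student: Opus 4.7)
By Lemma \ref{l5}, it suffices to prove the claim conditionally on $U_n=u_n$ for any deterministic integer sequence $(u_n)$ with $u_n\sim nP(H>\gamma M_n)$; under this conditioning, $\tilde k_n=[n^{1-\beta}u_n^\beta]$ is deterministic. The strategy is to bound $u_n-V_n$ by controlling the fluctuation of $X_{(1)}$ about $M_n$, translate this into $|\tilde k_n-\hat k_n|=o_p(\sqrt{\tilde k_n})$, and then show that the Hill statistic is stable under perturbations of $k$ of this order.

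Since $U_n=u_n\ge1$ forces some $H_j>M_n$ and hence $X_{(1)}\ge M_n$, we have $V_n\le U_n=u_n$, and $u_n-V_n$ counts the $X_j$'s falling in $(\gamma M_n,\gamma X_{(1)}]$. Writing $X_{(1)}=M_n(1+\eta_n)$, Assumption A.2 yields $P(\eta_n>\vep_n)\to0$, and on $\{\eta_n\le\vep_n\}$ these $X_j$'s lie below $M_n$ and hence equal $H_j$. Using $P(H>x)=x^{-\alpha}l(x)$ together with Assumption A.3,
$$E[u_n-V_n\mid X_{(1)}]\le n\bigl\{P(H>\gamma M_n)-P(H>\gamma M_n(1+\eta_n))\bigr\}=nP(H>\gamma M_n)\cdot O\bigl(\eta_n+P(H>M_n)^{1-\beta}\bigr),$$
with a matching conditional variance bound. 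A mean-value expansion of $x\mapsto x^\beta$ at $u_n$ then gives
$$|\tilde k_n-\hat k_n|\le\beta n^{1-\beta}u_n^{\beta-1}|u_n-V_n|+O(1)=\beta P(H>\gamma M_n)^{-(1-\beta)}|u_n-V_n|+O(1);$$
combining this with Assumptions A.1 and C produces $|\tilde k_n-\hat k_n|=o_p(\sqrt{\tilde k_n})$.

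To transfer this bound to the Hill statistic, I would use the identity
$$k_2h(k_2,n)-k_1h(k_1,n)=\sum_{i=k_1+1}^{k_2}\log\frac{X_{(i)}}{X_{(k_2)}}+k_1\log\frac{X_{(k_1)}}{X_{(k_2)}},\quad k_1<k_2,$$
together with $0\le\log(X_{(i)}/X_{(k_2)})\le\log(X_{(k_1)}/X_{(k_2)})$ for $k_1<i\le k_2$, to deduce
$$|h(\tilde k_n,n)-h(\hat k_n,n)|\le\log\frac{X_{(\tilde k_n\wedge\hat k_n)}}{X_{(\tilde k_n\vee\hat k_n)}}+\frac{|\tilde k_n-\hat k_n|}{\tilde k_n\vee\hat k_n}h(\tilde k_n\wedge\hat k_n,n).$$
Since $\tilde k_n/u_n\to\infty$, with probability tending to one $\hat k_n>u_n$, so both order statistics on the right take the form $Y_{(w_n,m)}$ for $m$ within $o_p(\sqrt{v_n})$ of $v_n$. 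The Vervaat-type argument used to derive \eqref{l3.eq5} from Lemma \ref{l4}, together with $|\tilde k_n-\hat k_n|=o_p(\sqrt{v_n})$ and the consistency $h(\tilde k_n\wedge\hat k_n,n)\prob1/\alpha$, then shows that both terms on the right are $o_p(1/\sqrt{\tilde k_n})$.

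The main obstacle is the careful bookkeeping combining the bound $\eta_n\le\vep_n$ (reflecting the light tail of $L$ through Assumption A.2) with the slowly-varying remainder controlled by Assumption A.3, so that the resulting estimate of $|u_n-V_n|$ survives multiplication by the magnification factor $P(H>\gamma M_n)^{-(1-\beta)}$ appearing when passing to $|\tilde k_n-\hat k_n|$, and still remains $o_p(\sqrt{\tilde k_n})$; this final verification relies on the full strength of Assumption C.
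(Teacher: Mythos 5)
Your proposal follows essentially the same route as the paper: trap $\gamma X_{(1)}$ in $(\gamma M_n,\gamma M_n(1+\vep_n)]$ via Assumption A to bound $U_n-V_n$ by the count of points in that interval, convert this to $|\hat k_n-\tilde k_n|=o_p(\sqrt{\tilde k_n})$ using the mean value theorem and Assumptions A.1, A.3 and C, and then absorb the index perturbation into the Hill statistic through the log-ratio of nearby order statistics (your two error terms correspond exactly to the paper's $A+B+C$ decomposition) together with the Vervaat-type consequence of Lemma \ref{l4}. One local justification is incorrect --- $U_n\ge1$ only forces some $X_j>\gamma M_n$, not $X_{(1)}\ge M_n$ --- but the fact you actually need, $P(X_{(1)}\ge M_n)\to1$, follows from Assumption B exactly as in the paper, so this does not change the substance of the argument.
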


\begin{proof} We start with showing that
\begin{equation}\label{l2.eq3}
\sqrt{\hat k_n}\left[\frac{\hat k_n}{\tilde k_n}-1\right]\prob0\,.
\end{equation}
In the proof of Theorem 3.2  in Chakrabarty and Samorodnitsky (2009), it has been shown that under Assumption B,
\begin{eqnarray}
\frac{U_n}{nP(H>\gamma M_n)}&\prob&1\,,\label{eq:u}\\
\frac{V_n}{nP(H>\gamma M_n)}&\prob&1\,,\label{eq:v}\\
\mbox{and }\frac{\hat k_n}{nP(H>\gamma M_n)^\beta}&\prob&1\,.\label{eq:khat}
\end{eqnarray}
In view of \eqref{eq:khat}, it suffices to show that
$$
n^{1/2}P(H>M_n)^{\beta/2}\left[\frac{\hat k_n}{\tilde k_n}-1\right]\prob0\,.
$$
Note that,
$$
\frac{n^{1-\beta}V_n^\beta}{n^{1-\beta}U_n^\beta+1}\le\frac{\hat k_n}{\tilde k_n}\le\frac{n^{1-\beta}V_n^\beta+1}{n^{1-\beta}U_n^\beta}\,,
$$
$$
\frac{n^{1-\beta}V_n^\beta}{n^{1-\beta}U_n^\beta+1}\le\left(\frac{V_n}{U_n}\right)^\beta\le\frac{n^{1-\beta}V_n^\beta+1}{n^{1-\beta}U_n^\beta}\,,
$$
and
\begin{eqnarray*}
\frac{n^{1-\beta}V_n^\beta+1}{n^{1-\beta}U_n^\beta}-\frac{n^{1-\beta}V_n^\beta}{n^{1-\beta}U_n^\beta+1}&=&\frac{n^{1-\beta}V_n^\beta+n^{1-\beta}U_n^\beta+1}{n^{1-\beta}U_n^\beta(n^{1-\beta}U_n^\beta+1)}\nonumber\\
&=&O_p\left(n^{-1}P(H>M_n)^{-\beta}\right)\\
&=&o_p\left(n^{-1/2}P(H>M_n)^{-\beta/2}\right)\,,\nonumber
\end{eqnarray*}
the equality in the second line following from \eqref{eq:u} and \eqref{eq:v}, and that in the third line following from Assumption B. Thus, it suffices to show that
$$
n^{1/2}P(H>M_n)^{\beta/2}\left[\left(\frac{V_n}{U_n}\right)^\beta-1\right]\prob0\,.
$$
By the mean value theorem, it follows that as $x\longrightarrow1$,
$$
x^\beta-1=O(|x-1|)\,.
$$
Hence, in view of the fact that $V_n/U_n$ converges to $1$ in probability, it suffices to show that
$$
n^{1/2}P(H>M_n)^{\beta/2}\left(\frac{V_n}{U_n}-1\right)\prob0\,.
$$
Using \eqref{eq:u} once again, all that needs to be shown is
$$
V_n-U_n=o_p\left(n^{-1/2}P(H>M_n)^{-(1-\beta/2)}\right)\,.
$$
Note that on the set $\{M_n\le X_{(1)}\le M_n(1+\vep_n)\}$, where $\vep_n$ is chosen to satisfy Assumption A,
$$
0\le U_n-V_n\le \sum_{j=1}^n\one\left(\gamma M_n<X_j\le\gamma M_n(1+\vep_n)\right)=:T_n\,.
$$
Thus, it suffices to show that
\begin{equation}
\lim_{n\to\infty}P(X_{(1)}\le M_n(1+\vep_n))=1\label{l1.eq2}\,,
\end{equation}
\begin{equation}
\lim_{n\to\infty}P(X_{(1)}\ge M_n)=1\label{l1.eq3}\,,
\end{equation}
\begin{equation}
\mbox{and }T_n=o_p\left(n^{-1/2}P(H>M_n)^{-(1-\beta/2)}\right)\,.\label{l1.eq4}
\end{equation}
For \eqref{l1.eq2}, note that as $n\longrightarrow\infty$,
\begin{eqnarray*}
P(X_{(1)}\le M_n(1+\vep_n))&=&\left(1-P(H>M_n)P(L>\vep_nM_n)\right)^n\longrightarrow1\,,
\end{eqnarray*}
the convergence following from \eqref{A.2} in Assumption A. This shows \eqref{l1.eq2}. For \eqref{l1.eq3}, observe that
$$
P(X_{(1)}<M_n)\le\left(1-P(H>M_n)\right)^n\,.
$$
By Assumption B, the right hand side converges to zero, and hence \eqref{l1.eq3} holds. To show \eqref{l1.eq4}, note that
$$
\Var(T_n)\le E(T_n)=np_n\,,
$$
where
$$
p_n:=P(\gamma M_n<X_1\le\gamma(1+\vep_n)M_n)\,.
$$
In view of Assumption C, for \eqref{l1.eq4}, it suffices to show that
\begin{equation}\label{l1.eq5}
p_n=o(P(H>M_n)^{2-\beta})\,.
\end{equation}
For $n$ large enough so that $\gamma(1+\vep_n)<1$,
\begin{eqnarray*}
p_n&=&P(H>\gamma M_n)-\gamma^{-\alpha}M_n^{-\alpha}(1+\vep_n)^{-\alpha}l\left(\gamma M_n(1+\vep_n)\right)\\
&=&\gamma^{-\alpha}M_n^{-\alpha}l\left(\gamma M_n(1+\vep_n)\right)\left\{1-(1+\vep_n)^{-\alpha}\right\}\\
&&\,\,\,\,\,\,+P(H>\gamma M_n)\left\{1-\frac{l\left(\gamma M_n(1+\vep_n)\right)}{l(\gamma M_n)}\right\}\,.
\end{eqnarray*}
The first term on the right hand side is clearly $O(\vep_nP(H>M_n))$, which by \eqref{A.1}, is $o\left(P(H>M_n)^{2-\beta}\right)$. By \eqref{A.3}, it follows that the second term is also $o\left(P(H>M_n)^{2-\beta}\right)$. This shows \eqref{l1.eq5}, and thus completes the proof of \eqref{l2.eq3}.

Next, we show that for all $\eta\in\bbr$, as $n\longrightarrow\infty$,
\begin{equation}\label{l2.eq1}
\sqrt{\tilde k_n}\log\frac{X_{(n,[\tilde k_n+\eta\tilde k_n^{1/2}])}}{X_{(n,\tilde k_n)}}\prob-\frac\eta\alpha\,.
\end{equation}
Let $(u_n)$ be a sequence of positive integers satisfying \eqref{l4.eq1}
 For $n$ large enough so that $1\le u_n<[n^{1-\beta}u_n^\beta]\le n$ and $1\le u_n<[n^{1-\beta}u_n^\beta]+\eta[n^{1-\beta}u_n^\beta]^{1/2}\le n$, the conditional distribution of $\left({X_{(\tilde k_n)}},{X_{([\tilde k_n+\eta\tilde k_n^{1/2}])}}\right)$ given that $U_n=u_n$ is same as the (unconditional) distribution of
$$
\left(Y_{(n-u_n,[n^{1-\beta}u_n^\beta]-u_n)},Y_{(n-u_n,[n^{1-\beta}u_n^\beta]+\eta[n^{1-\beta}u_n^\beta]^{1/2}-u_n)}\right)\,,
$$
where $\{Y_{(n,j)}:1\le j\le n\}$ is as defined in Lemma \ref{l4}, with $\tilde M_n$ as in \eqref{l2.eq4}.
Define $v_n$ as in \eqref{l2.eq2}
By Lemma \ref{l4}, it follows that
$$
\sqrt{v_n}\left(\frac1{v_n}\sum_{i=1}^n\delta_{Y_{n-u_n,i}/b((n-u_n)/v_n)}(y^{-1/\alpha},\infty]-y\right)\Longrightarrow W(y)
$$
in $D[0,\infty)$.
Using Vervaat's lemma, it follows that
\begin{equation}\label{l2.eq5}
\sqrt{v_n}\left[\left(\frac{Y_{(n-u_n,[v_nx])}}{b((n-u_n)/v_n)}\right)^{-\alpha}-x\right]\Longrightarrow-W(x)
\end{equation}
in $D[0,\infty)$. From here, we conclude that
$$
\left(\sqrt{v_n}\left[\left(\frac{Y_{(n-u_n,[v_ns_n])}}{b((n-u_n)/v_n)}\right)^{-\alpha}-s_n\right],{\sqrt v_n}\left[\left(\frac{Y_{(n-u_n,v_n)}}{b((n-u_n)/v_n)}\right)^{-\alpha}-1\right]\right)
$$
$$
\Longrightarrow(-W(1),-W(1))\,,
$$
where $s_n:=1+\eta v_n^{-1}[n^{1-\beta}u_n^\beta]^{1/2}$. Since the limit process is $C[0,\infty)\times C[0,\infty)$ valued, this can be done using Skorohod's Theorem (Theorem 2.2.2 in \cite{borkar:1995}). Using the Delta method with $x\mapsto-\frac1\alpha\log x$, it follows that
$$
\left(\sqrt{v_n}\left\{\log\frac{Y_{(n-u_n,[v_ns_n])}}{b((n-u_n)/v_n)}+\frac1\alpha\log s_n\right\},\sqrt{v_n}\log\frac{Y_{(n-u_n,v_n)}}{b((n-u_n)/v_n)}\right)
$$
$$
\Longrightarrow\left(\frac1\alpha W(1),\frac1\alpha W(1)\right)\,.
$$
Since,
$$
\lim_{n\to\infty}\sqrt{v_n}\log s_n=\eta\,,
$$
it follows that
$$
\sqrt{v_n}\log\frac{Y_{(n-u_n,[n^{1-\beta}u_n^\beta]-u_n)}}{Y_{(n-u_n,[n^{1-\beta}u_n^\beta]+\eta[n^{1-\beta}u_n^\beta]^{1/2}-u_n)}}\prob-\frac\eta\alpha\,.
$$
What we have shown is that whenever $(u_n)$ is a sequence satisfying \eqref{l4.eq1}, the conditional distribution of the left hand side of \eqref{l2.eq1} given $U_n=u_n$ converges weakly to $-\eta/\alpha$. By an appeal to Lemma \ref{l5}, this shows \eqref{l2.eq1}.

Coming to the proof of \eqref{l2.claim}, note that
\begin{eqnarray*}
&&\sqrt{\tilde k_n}\left[h(\hat k_n,n)-h(\tilde k_n,n)\right]\\
&=&\frac1{\sqrt{\tilde k_n}}\left[\sum_{i=1}^{\hat k_n}\log\frac{X_{(i)}}{X_{(\tilde k_n)}}-\sum_{i=1}^{\tilde k_n}\log\frac{X_{(i)}}{X_{(\tilde k_n)}}\right]+\frac{\hat k_n}{\sqrt{\tilde k_n}}\log\frac{X_{(\tilde k_n)}}{X_{(\hat k_n)}}\\
&&+\sqrt{\tilde k_n}\left(\frac1{\hat k_n}-\frac1{\tilde k_n}\right)\sum_{i=1}^{\hat k_n}\log\frac{X_{(i)}}{X_{(\hat k_n)}}\\
&=:&A+B+C\,.
\end{eqnarray*}
Clearly,
$$
C=\sqrt{\tilde k_n}\left(1-\frac{\hat k_n}{\tilde k_n}\right)h(\hat k_n,n)\prob0\,,
$$
the convergence in probability following from \eqref{l2.eq3} and the fact that
$$h(\hat k_n,n)\prob1/\alpha\,,$$
which has been shown in \cite{chakrabarty:samorodnitsky:2009}. For showing that $B\prob0$, fix $\epsilon>0$ and let $\eta:=\epsilon\alpha/6$. Note that
$$
P(|B|>\epsilon)
$$
$$
\le P\left[\frac{\hat k_n}{\tilde k_n}>2\right]+P\left[\sqrt{\tilde k_n}\left|\frac{\hat k_n}{\tilde k_n}-1\right|>\eta\right]+P\left[\sqrt{\tilde k_n}\log\frac{X_{(\tilde k_n-\eta\tilde k_n^{1/2})}}{X_{(\tilde k_n+\eta\tilde k_n^{1/2})}}>3\frac\eta\alpha\right]\,.
$$
By \eqref{l2.eq3} and \eqref{l2.eq1}, it follows that $B\prob0$. Since for $0<\epsilon<1$,
$$
P(|A|>\epsilon)\le P\left[\sqrt{\tilde k_n}\left|\frac{\hat k_n}{\tilde k_n}-1\right|>\epsilon\right]+P\left[\log\frac{X_{(\tilde k_n-\tilde k_n^{1/2})}}{X_{(\tilde k_n+\tilde k_n^{1/2})}}>1\right]\,,
$$
it is immediate that $A\prob0$. This completes the proof.
\end{proof}

\begin{proof}[Proof of Theorem \ref{t1}] In view of Lemma \ref{l2}, it suffices to show that
\begin{equation}\label{t1.eq1}
\sqrt{\tilde k_n}\left(h(\tilde k_n,n)-\frac1\alpha\right)\Longrightarrow N\left(0,\frac1{\alpha^2}\right)\,.
\end{equation}
Define
\begin{eqnarray*}
S_1&:=&\sum_{i=1}^{U_n}\log\frac{X_{(i)}}{X_{(\tilde k_n)}}\,\\
S_2&:=&\sum_{i=U_n+1}^{\tilde k_n}\log\frac{X_{(i)}}{X_{(\tilde k_n)}}\,
\end{eqnarray*}
and note that on the set $\{U_n\le\tilde k_n\}$,
$$
h(\tilde k_n,n)=\frac1{\tilde k_n}(S_1+S_2)\,.
$$
Let $u_n$ be a sequence of integers satisfying \eqref{l4.eq1} and define $v_n$ and $\tilde M_n$ as in \eqref{l2.eq2} and \eqref{l2.eq4}. For $n$ large enough, note that
$$
[S_2|U_n=u_n]\eid\sum_{i=1}^{v_n}\log\frac{Y_{(n-u_n,i)}}{Y_{(n-u_n,v_n)}}=:\tilde S_2\,,
$$
where $\{Y_{(n,j)}:1\le j\le n\}$ is as defined in the statement of Lemma \ref{l3}. By Lemma \ref{l3}, it follows that
$$
\sqrt{v_n}\left(\frac1{v_n}\tilde S_2-\frac1\alpha\right)\Longrightarrow N\left(0,\frac1{\alpha^2}\right)\,.
$$
This along with the fact that
$$
\sqrt{v_n}\tilde S_2\left(\frac1{[n^{1-\beta}u_n^\beta]}-\frac1{v_n}\right)=-\frac{\tilde S_2}{[n^{1-\beta}u_n^\beta]}\frac{u_n}{\sqrt{v_n}}=O_p(1)o(1)\,,
$$
shows that
$$
\left[\left.\sqrt{\tilde k_n}\left(\frac1{\tilde k_n}S_2-\frac1\alpha\right)\right|U_n=u_n\right]\Longrightarrow N\left(0,\frac1{\alpha^2}\right)\,.
$$
Since this is true for all sequence of integers $(u_n)$ satisfying \eqref{l4.eq1}, by Lemma \ref{l5} it follows that
$$
\sqrt{\tilde k_n}\left(\frac1{\tilde k_n}S_2-\frac1\alpha\right)\Longrightarrow N\left(0,\frac1{\alpha^2}\right)\,.
$$
On the set $\{1\le X_{(1)}\le2M_n\}$,
\begin{eqnarray*}
\frac{S_1}{\sqrt{\tilde k_n}}&\le&\frac{U_n\log(2M_n)}{\sqrt{\tilde k_n}}\\
&=&O_p\left(n^{1/2}P(H>M_n)^{1-\beta/2}\log M_n\right)\\
&=&o_p(1)\,.
\end{eqnarray*}
Since the probability of that set converges to one, it follows that
$$
\frac{S_1}{\sqrt{\tilde k_n}}\prob0\,.
$$
This completes the proof.
\end{proof}

\section{Second order regular variation}\label{sec:secondorder} In this section, we show that if the tail of $H$ is second order regularly varying, and $L$ is sufficiently light-tailed, then the hypotheses of Theorem \ref{t1} hold. By the tail being second order regularly varying, we mean that there is a function $A:(0,\infty)\longrightarrow(0,\infty)$ which is regularly varying with index $\rho\alpha$ where $\rho<0$, such that
\begin{equation}\label{eq:secondorder}
\lim_{t\to\infty}\frac{\frac{P(H>tx)}{P(H>t)}-x^{-\alpha}}{A(t)}=x^{-\alpha}\frac{x^{\rho\alpha}-1}{\rho/\alpha}
\end{equation}
for all $x>0$; see (2.3.24) in \cite{dehaan:ferreira:2006}.

\begin{theorem}\label{t2} Suppose that
\begin{equation}\label{t2.eq1}
\max(1-1/\alpha,0)<\beta<1\,,
\end{equation}
all moments of $L$ are finite, $M_n$ satisfies assumptions B and C, and the tail of $H$ is second order regularly varying so that the second order parameter $\rho$ satisfies
$$
\rho<-\frac{1-\beta}{\beta}\,.
$$
Then, \eqref{t1.claim} holds.
\end{theorem}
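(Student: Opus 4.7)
The plan is to verify Assumptions A, D and E of Section~\ref{sec:result}; since B and C are already in the hypothesis of Theorem~\ref{t2}, the conclusion \eqref{t1.claim} will then follow at once from Theorem~\ref{t1}. The main tools are the second-order expansion \eqref{eq:secondorder}, the associated uniform Potter-type bounds, and the regular variation of $b$ of index $1/\alpha$ together with that of $A$ of index $\rho\alpha$.

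For Assumption A, I would take $\vep_n:=P(H>M_n)^{1-\beta}/\log M_n$; then \eqref{A.1} is immediate. For \eqref{A.2}, the hypothesis $\beta>\max(1-1/\alpha,0)$ forces $\alpha(1-\beta)<1$, so $\vep_n M_n$ grows polynomially in $M_n$; since all moments of $L$ are finite, Markov's inequality $P(L>\vep_n M_n)\le E[L^k]/(\vep_n M_n)^k$ for arbitrarily large $k$ then absorbs the factor $nP(H>M_n)$ (which is controlled by B and C). For \eqref{A.3}, the second-order expansion gives
\begin{equation*}
\frac{l(\gamma M_n(1+\vep_n))}{l(\gamma M_n)}-1 \;=\; O\bigl(A(\gamma M_n)\vep_n\bigr)+o\bigl(A(\gamma M_n)\bigr);
\end{equation*}
multiplying by $P(H>M_n)^{-(1-\beta)}$ yields a vanishing quantity as soon as $A(M_n)=o\bigl(P(H>M_n)^{1-\beta}\bigr)$, which in turn follows from $\rho<-(1-\beta)$ and hence \emph{a fortiori} from the strictly stronger hypothesis $\rho<-(1-\beta)/\beta$.

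For Assumption D, set $t_n:=b(n/v_n)$, so that $(n/v_n)P(H>t_n)\to 1$. Substituting $x=y^{-1/\alpha}$ into \eqref{eq:secondorder} yields, uniformly for $y$ in compact subsets of $(0,\infty)$,
\begin{equation*}
\frac{n}{v_n}P\bigl(H>t_ny^{-1/\alpha}\bigr)-y \;=\; A(t_n)\,y\,\frac{y^{-\rho}-1}{\rho/\alpha}+o(A(t_n)),
\end{equation*}
and the behaviour near $y=0$ is controlled by a uniform Potter bound. It therefore suffices to show $\sqrt{v_n}\,A(t_n)\to 0$. Since $A\circ b$ is regularly varying of index $\rho$ and $n/v_n\sim P(H>\gamma M_n)^{-\beta}$, a standard computation gives $A(t_n)\sim \ell_1(M_n)P(H>M_n)^{-\beta\rho}$ for some slowly varying $\ell_1$; combined with $\sqrt{v_n}\sim \ell_2(M_n)n^{1/2}P(H>M_n)^{\beta/2}$ this produces
\begin{equation*}
v_n A(t_n)^2 \;\sim\; \ell_3(M_n)\,n\,P(H>M_n)^{\beta(1-2\rho)}.
\end{equation*}
The hypothesis $\rho<-(1-\beta)/\beta$ is precisely equivalent to $\beta(1-2\rho)>2-\beta$, so $v_nA(t_n)^2$ is dominated by $nP(H>M_n)^{2-\beta}$ times a vanishing positive power of $P(H>M_n)$, and Assumption C then forces $v_nA(t_n)^2\to 0$.

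Assumption E is handled by the same rate computation: a uniform Potter bound dominates $\bigl|(n/v_n)P(H>t_ns)-s^{-\alpha}\bigr|/s$ by $CA(t_n)s^{-\alpha-1+\delta}$ for all $s\ge T$ large enough, so $\int_T^\infty$ of this integrand is $O\bigl(A(t_n)T^{-\alpha+\delta}\bigr)$; multiplying by $\sqrt{v_n}$ invokes the same decay $\sqrt{v_n}A(t_n)\to 0$ established for D, and letting $T\to\infty$ finishes the verification. With A, D and E in place, Theorem~\ref{t1} delivers \eqref{t1.claim}. The main obstacle is the bookkeeping needed to translate the SO-RV rate $A(b(n/v_n))$ into a concrete power of $P(H>M_n)$ and then align it with Assumption C: the algebraic threshold $\rho<-(1-\beta)/\beta$ is exactly what makes the alignment succeed, and the uniformity on compact subsets of $[0,\infty)$ in D (and on $[T,\infty)$ in E) requires a careful combination of the pointwise SO-RV expansion with uniform Potter bounds.
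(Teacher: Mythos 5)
Your proposal is correct and follows essentially the same route as the paper: verify Assumptions A, D and E and invoke Theorem \ref{t1}, with the key rate computation $\sqrt{v_n}\,A(b(n/v_n))\to 0$ extracted from $\rho<-(1-\beta)/\beta$ together with Assumption C (your inequality $\beta(1-2\rho)>2-\beta$ is exactly the paper's condition $\eta:=-\rho\beta-(1-\beta)>0$), and uniform second-order/Potter-type bounds delivering D and E simultaneously. The only detail you gloss over, which the paper handles via the sandwich $P\left(H>(1-\vep_n)b(n/v_n)\right)^{-1}<n/v_n\le P\left(H>b(n/v_n)\right)^{-1}$ with $\vep_n=A(b(n/v_n))\wedge(1/2)$, is that replacing $1/P(H>b(n/v_n))$ by $n/v_n$ must itself be done with error $O(A(b(n/v_n)))$; your explicit choice $\vep_n=P(H>M_n)^{1-\beta}/\log M_n$ for Assumption A differs from the paper's (which sandwiches $\vep_n$ between $n^{1/p}P(H>M_n)^{1/p}M_n^{-1}$ and $P(H>M_n)^{1-\beta}$) but works equally well, resting on the same use of $\alpha(1-\beta)<1$ and the finiteness of all moments of $L$.
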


\begin{proof}In view of Theorem \ref{t1}, it suffices to check that assumptions A, D and E hold.
By Theorem 2.3.9 in \cite{dehaan:ferreira:2006}, it follows that given $\epsilon,\delta>0$, there exist $t_0>1$ such that whenever $t,tx\ge t_0$,
\begin{equation}\label{t2.eq2}
\left|\frac{\frac{P(H>tx)}{P(H>t)}-x^{-\alpha}}{A(t)}-x^{-\alpha}\frac{x^{\rho\alpha}-1}{\rho/\alpha}\right|\le\epsilon x^{-\alpha+\rho\alpha}\max(x^\delta,x^{-\delta})\,.
\end{equation}
Note that \eqref{t2.eq2} holds with a possibly different $A(t)$ from that in \eqref{eq:secondorder}. However, this $A$ is also regularly varying with index $\rho\alpha$. For the rest of the proof, by $A(\cdot)$, we shall mean the one for which \eqref{t2.eq2} holds.

We start with showing that
\begin{equation}\label{t2.eq3}
\sqrt{v_n}=o\left(A(b(n/v_n))^{-1}\right)\,,
\end{equation}
whenever $v_n$ is a sequence satisfying \eqref{D.eq1}.
Let
$$
\eta:=-\rho\beta-(1-\beta)\,.
$$
The upper bound on $\rho$ implies $\eta>0$. Note that $A(b(\cdot))$ varies regularly with index $\rho$ and $n/v_n\sim P(H>\gamma M_n)^{-\beta}$.
Thus, there is a slowly varying function $\bar l$ so that
\begin{eqnarray*}
A\left(b(n/v_n)\right)^{-1}&\sim&\bar l(M_n)P(H>M_n)^{\rho\beta}\\
&\gg& P(H>M_n)^{\eta+\rho\beta}\\
&=&\frac{n^{1/2}P(H>M_n)^{\beta/2}}{n^{1/2}P(H>M_n)^{1-\beta/2}}\\
&\gg&n^{1/2}P(H>M_n)^{\beta/2}\\
&\sim&\gamma^{\alpha\beta/2}\sqrt{v_n}\,,
\end{eqnarray*}
the inequality in the second last line following from Assumption C. This shows
\eqref{t2.eq3}.

Now, we show that assumptions D and E hold.
Let $$\vep_n:=A(b(n/v_n))\wedge(1/2)\,.$$ Clearly $1>\vep_n>0$ for all $n$. Recall from \eqref{bdefn} that $z<b(y)$ iff $P(H>z)^{-1}<y$. Thus,
$$
\frac1{P\left(H>(1-\vep_n)b(n/v_n)\right)}<\frac n{v_n}\le\frac1{P\left(H>b(n/v_n)\right)}\,.
$$
Let $\delta>0$ be such that $\rho\alpha+\delta<0$. Let $t_0$ be such that whenever $t,tx\ge t_0$, \eqref{t2.eq2} holds with $\epsilon=1$ and this $\delta$. Fix $0<T<\infty$. Let $N$ be such that for $n\ge N$, $b(n/v_n)>2t_0\vee t_0/T$. Thus, there is $C<\infty$, whose value may change from line to line, depending only on $T$, so that for $n\ge N$ and $x\ge T$,
$$
\left|\frac{P(H>b(n/v_n)x)}{P(H>b(n/v_n))}-x^{-\alpha}\right|\le CA(b(n/v_n))x^{-\alpha+\rho\alpha+\delta}\le CA(b(n/v_n))x^{-\alpha}
$$
the second inequality following since $\rho\alpha+\delta<0$, and similarly
\begin{eqnarray*}
&&\sup_{T\le x<\infty}\left|\frac{P(H>b(n/v_n)x)}{P(H>(1-\vep_n)b(n/v_n))}-x^{-\alpha}(1-\vep_n)^\alpha\right|\\ &\le&CA((1-\vep_n)b(n/v_n))\left(\frac x{1-\vep_n}\right)^{-\alpha}\\
&\le&CA(b(n/v_n))x^{-\alpha}\,.
\end{eqnarray*}
Since
$$
(1-\vep_n)^\alpha-1=O(\vep_n)=O(A(b(n/v_n)))\,,
$$
it follows that there is (a possibly different) $C<\infty$ so that
for all $x\ge T$,
$$
\left|\frac n{v_n}{P(H>b(n/v_n)x)}-x^{-\alpha}\right|\le CA(b(n/v_n))x^{-\alpha}\,.
$$
This in view of \eqref{t2.eq3} shows that assumptions D and E hold.

Finally, we show that Assumption A holds. By \eqref{t2.eq1}, it follows that
$$
1-\alpha(1-\beta)>0\,.
$$
Let $p>0$ be such that
$$
\frac{\alpha(1-\beta)}p<1-\alpha(1-\beta)\,.
$$
This choice of $p$ ensures that
\begin{equation}\label{t2.eq4}
\frac{\alpha(2-\beta)}p<1-\alpha\left(1-\beta-\frac1p\right)\,.
\end{equation}
Note that $xP(H>x)^{1-\beta-1/p}$ is regularly varying with index $1-\alpha(1-\beta-1/p)$ and $P(H>x)^{-(2-\beta)/p}$ is regularly varying with index $\alpha(2-\beta)/p$. Thus, by \eqref{t2.eq4} it follows that
\begin{eqnarray*}
M_nP(H>M_n)^{1-\beta-1/p}
\gg P(H>M_n)^{-(2-\beta)/p}
\gg n^{1/p}\,,
\end{eqnarray*}
the last inequality following from Assumption C. Thus
$$
n^{1/p}P(H>M_n)^{1/p}M_n^{-1}\ll P(H>M_n)^{1-\beta}\,.
$$
Let $(\vep_n)$ be such that
$$
n^{1/p}P(H>M_n)^{1/p}M_n^{-1}\ll\vep_n\ll P(H>M_n)^{1-\beta}\,.
$$
Clearly, \eqref{A.1} holds with this choice of $(\vep_n)$. For \eqref{A.2}, note that since $EL^p<\infty$,
$$
nP(H>M_n)P(L>\vep_nM_n)=O\left(nP(H>M_n)\vep_n^{-p}M_n^{-p}\right)=o(1)\,.
$$
This shows \eqref{A.2}. Finally, for \eqref{A.3}, choose $\delta>0$ so that $\rho\alpha+\delta<0$. Let $t_0$ be such that \eqref{t2.eq2} holds with this $\delta$ and $\epsilon=1$. Thus, as $n\longrightarrow\infty$,
\begin{eqnarray*}
\left|\frac{l(\gamma M_n(1+\vep_n))}{l(\gamma M_n)}-1\right|
&=&O\left(\left|\frac{P(H>\gamma M_n(1+\vep_n))}{P(H>\gamma M_n)}-(1+\vep_n)^{-\alpha}\right|\right)\\
&=&O\left(A(M_n)M_n^{-\alpha+\rho\alpha+\delta}\right)\\
&=&o\left(P(H>M_n)^{1-\beta}\right)\,,
\end{eqnarray*}
the last step following from the observations that
$$
P(H>M_n)^{-(1-\beta)}A(M_n)M_n^{-\alpha+\rho\alpha+\delta}
$$
$$
=\left\{M_n^{-\alpha}P(H>M_n)^{-(1-\beta)}\right\}M_n^{\rho\alpha+\delta}A(M_n)
$$
and that each of the three terms on the right hand side go to zero. This shows that Assumption A holds and thus completes the proof.
\end{proof}

\section{How to use this in practice}\label{sec:practice} While the assumptions A, D and E mentioned in Section \ref{sec:result} can be verified by assuming the second order regular variation and that all moments of $L$ are finite, one still needs a way to check assumptions B and C in practice. Statistical tests for checking Assumption B have been discussed in Chakrabarty and Samorodnitsky (2009). For checking Assumption C, which means that $M_n$ grows fast enough, one can use the facts that
$$
\frac{X_{(1)}}{M_n}\prob1\,,
$$
and
$$
\frac{\sum_{j=1}^n\one(X_j>\gamma M_n)}{nP(H>\gamma M_n)}\prob1
$$
for $0<\gamma<1$. These facts have been proved in Chakrabarty and Samorodnitsky (2009). An immediate consequence of these is that if Assumption C holds, then
$$
n\left(\frac1n\sum_{j=1}^n\one(X_j>\gamma X_{(1)})\right)^{2-\beta}(\log X_{(1)})^2\prob0\,.
$$
Thus, a natural thing to do is to choose $\beta$ (if possible) such that the above is satisfied.

We would like to mention at this point that from the point of view of using Theorem \ref{t2}, some issues remain unsorted. One of them is how does one ensure \eqref{t2.eq1}. A naive method would be to first get a ``rough'' estimate of $\alpha$ and then choose $\beta$ to satisfy the above. However, it is not clear at the moment that this is going to work. The other unsorted issue is that of checking the second order regular variation in the data and that $\rho<-(1-\beta)/\beta$. But then part of this is also a criticism for the Hill statistic applied to untruncated data; the same is known to be asymptotically normal only under some form of second order regular variation.

\section{Acknowledgement} The author is immensely grateful to his adviser Gennady Samorodnitsky for many helpful discussions.

\bibliography{D:/Mydata/work_arijit/res_ht/bibfile}
\bibliographystyle{D:/Mydata/work_arijit/res_ht/apalike}

\end{document}